\numberwithin{equation}{section}
\theoremstyle{plain}
\newtheorem{theorem}{Theorem}[section]
\newtheorem{lemma}[theorem]{Lemma}
\newtheorem{corollary}[theorem]{Corollary}
\newtheorem{proposition}[theorem]{Proposition}
\theoremstyle{definition}
\newtheorem{Def}[theorem]{Definition}
\newtheorem{example}[theorem]{Example}
\newtheorem{remark}[theorem]{Remark}
\newtheorem{?}[theorem]{Problem}
\def\INV{\mathrm{INV}}
\def\MAJ{\mathrm{MAJ}}
\def\MAK{\mathrm{MAK}}
\def\STAT{\mathrm{STAT}}
\def\DEN{\mathrm{DEN}}
\def\F{\mathrm{F}}
\def\IMAJ{\mathrm{IMAJ}}
\def\des{\mathrm{des}}
\def\ides{\mathrm{ides}}
\def\exc{\mathrm{exc}}
\def\Adj{\mathrm{Adj}}
\def\SS{\mathcal{S}}
\def\O{\mathcal{O}}
\def\boxit#1{\leavevmode\hbox{\vrule\vtop{\vbox{\kern.33333pt\hrule
    \kern1pt\hbox{\kern1pt\vbox{#1}\kern1pt}}\kern1pt\hrule}\vrule}}
\begin{document}

\title[STAT on words]{Mahonian STAT on rearrangement class of words}

\author[S.~Fu]{Shishuo Fu}
\address{College of Mathematics and Statistics, Chongqing University, Huxi Campus LD~506, Chongqing 401331, P.R. China.}
\email{fsshuo@cqu.edu.cn}

\author[T.~Hua]{Ting Hua}
\address{College of Mathematics and Statistics, Chongqing University, Huxi Campus, Chongqing 401331, P.R. China.}
\email{htingzl@sina.com}

\author[V.~Vajnovszki]{Vincent Vajnovszki}
\address{LE2I, Universit\'{e} de Bourgogne Franche-Comt\'{e}, BP 47870, 21078 Dijon Cedex, France}
\email{vvajnov@u-bourgogne.fr}

\date{\today}

\begin{abstract}
In 2000, Babson and Steingr\'{i}msson generalized the notion of permutation patterns to the so-called vincular patterns, and they showed that many Mahonian statistics can be expressed as sums of vincular pattern occurrence statistics. STAT is one of such Mahonian statistics discoverd by them. In 2016, Kitaev and the third author introduced a words analogue of STAT and proved a joint equidistribution result involving two sextuple statistics on the whole set of words with fixed length and alphabet. Moreover, their computer experiments hinted at a finer involution on $R(w)$,  the rearrangement class of a given word $w$. We construct such an involution in this paper, which yields a comparable joint equidistribution between two sextuple statistics over $R(w)$. Our involution builds on Burstein's involution and Foata-Sch\"{u}tzenberger's involution that utilizes the celebrated RSK algorithm. 

\end{abstract}

\keywords{permutation statistic; vincular pattern; involution; equidistribution; RSK algorithm.}

\maketitle


\section{Introduction}\label{sec:intro}
An occurrence of a classical pattern $p$ in a permutation $\pi$ is a subsequence of $\pi$ that is order-isomorphic to $p$, i.e., that has the same pairwise comparisons as $p$. For example, $41253$ has two occurrences of the pattern $3142$ in its subsequences $4153$ and $4253$. The study of (classical) permutation patterns have mostly been with respect to \emph{avoidance}, i.e., on the enumeration of permutations in the symmetric group $\SS_n$ that have no occurrences of the pattern(s) in question. The tone is generally believed to be set in 1968, when Knuth published Volume one of \emph{The Art of Computer Programming} \cite{Knu}. It was observed in \cite{Knu} that the number of $231$-avoiding permutations are enumerated by the Catalan numbers. See \cite{Kit} and the references therein for a comprehensive introduction to patterns in permutations.

In 2000, Babson and Steingr\'{i}msson \cite{BS} traced out a new line of research by generalizing classical patterns to what are now known as \emph{vincular patterns}. In an occurrence of a vincular pattern, some letters of that order-isomorphic subsequence may be required to be adjacent in the permutation, and this is done by underscoring the adjacent letters in the expression for the pattern. For our previous example, only $4153$ represents an occurrence of the vincular pattern $\underline{31}42$ in $41253$. As was investigated thoroughly in \cite{BS}, these vincular patterns turned out to be the ``building blocks'' in their classification of Mahonian statistics. Recall that a permutation statistic is called \emph{Mahonian} precisely when it has the same distribution, on $\SS_n$, as $\INV$, the number of inversions. Given a vincular pattern $\tau$ and a permutation $\pi$, we denote by $(\tau)\pi$ the number of occurrences of the pattern $\tau$ in $\pi$. When it causes no confusion, we will even drop the parentheses. For instance, it was observed/defined in \cite{BS}:
\begin{align}
\INV &=21=\underline{21}+3\underline{12}+3\underline{21}+2\underline{31},\nonumber\\
\MAJ &=\underline{21}+1\underline{32}+2\underline{31}+3\underline{21},\nonumber\\
\MAK &=\underline{21}+1\underline{32}+2\underline{31}+\underline{32}1,\nonumber\\
\STAT &=\underline{21}+\underline{13}2+\underline{21}3+\underline{32}1.
\end{align}
Note that the first three Mahonian statistics are well-known ones putting on a new look, while the last one $\STAT$ was one of those introduced in \cite{BS} and shown to be Mahonian.

Now we state the joint equidistribution results involving STAT obtained by Burstein \cite{Bur} on $\SS_n$ and by Kitaev and the third author \cite{KV} on $[m]^n$, the set of length $n$ words over the alphabet $[m]:=\{1,2,\ldots,m\}$. These two results largely motivated this work. All undefined statistics will be introduced in the next section. For the spelling of the statistics, we follow the convention of \cite{CSZ}, i.e., all Mahonian statistics are spelled with uppercase letters, all Eulerian statistics are spelled with lowercase letters, while all the remaining ones are merely capitalized. Moreover, the italic letters are used for set-valued statistics, such as {\it Id} in Theorem~\ref{main-Sn} and Corollary~\ref{main-Rw}.
\begin{theorem}[Theorem~2.1 in \cite{Bur}]\label{thm-Bur}$\ $\\
Statistics $(\Adj, \des, \F, \MAJ, \STAT)$ and
$(\Adj, \des, \F, \STAT, \MAJ)$ have the same joint distribution on $\SS_n$ for all $n$.
\end{theorem}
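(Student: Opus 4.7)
The plan is to construct an involution $\phi:\SS_n\to\SS_n$ that preserves the triple $(\Adj,\des,\F)$ and exchanges $\MAJ$ with $\STAT$; such a map immediately gives the required joint equidistribution. The key observation from the vincular definitions is that both statistics decompose as $\des+R$: for $\MAJ$ the remainder $R_\MAJ = 1\underline{32}+2\underline{31}+3\underline{21}$ counts, for each adjacent descent $(\pi_i,\pi_{i+1})$, the letters strictly to the \emph{left} of position $i$, partitioned into three weight classes by their value relative to $\pi_{i+1}$ and $\pi_i$; for $\STAT$ the remainder $R_\STAT = \underline{13}2+\underline{21}3+\underline{32}1$ counts, for each adjacent bigram, letters strictly to the \emph{right} of position $i+1$, with three analogous (but slightly different) weight classes. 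Thus $\phi$ should fix the ``skeleton'' of adjacent bigrams and merely permute the remaining letters so as to trade left-of-descent contributions for right-of-bigram contributions.

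My first step would be to reduce to the adjacency-free case by collapsing each maximal block of consecutively valued, consecutively positioned letters into a single super-letter, recording its length and orientation. The quintuple $(\Adj,\des,\F,\MAJ,\STAT)$ descends unambiguously to the reduced permutation, so it suffices to define $\phi$ on the $\Adj=0$ class. On that class I would build the involution inductively on the number of descents, anchoring $\pi_1$ throughout. At each adjacent bigram one introduces a local swap that transfers a letter contributing to $R_\MAJ$ on the left of a descent to a letter contributing to the matching $R_\STAT$-class on the right of a bigram (and conversely), using a rank-preserving correspondence between the two multisets of ``free'' letters; a Foata--Sch\"utzenberger style rearrangement within ascending runs secures the correspondence while automatically preserving $\des$.

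The main obstacle is to keep $(\Adj,\des,\F)$ intact \emph{and} guarantee $\phi^2=\id$ simultaneously. A naive local swap can easily disturb $\F$ or create/destroy an adjacency, breaking $\Adj$, so the pairing between left- and right-side letters must be defined through an order-preserving bijection on fixed value sets, which makes involutivity transparent, and all rearrangements must be confined to the suffix $\pi_2\pi_3\cdots\pi_n$ to protect $\F$. Once the pairing is set up correctly, checking that $(\Adj,\des,\F)$ is preserved and that $\MAJ$ and $\STAT$ are exchanged reduces to weight-class-by-weight-class bookkeeping on the four vincular patterns defining each statistic.
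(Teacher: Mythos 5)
Your proposal is a plan rather than a proof: the map that would do all the work is never actually constructed. The phrases ``local swap that transfers a letter contributing to $R_\MAJ$ \dots to a letter contributing to the matching $R_\STAT$-class,'' ``rank-preserving correspondence between the two multisets of free letters,'' and ``Foata--Sch\"utzenberger style rearrangement within ascending runs'' are exactly the places where the difficulty lives, and they are left unspecified. You yourself identify the obstruction (keeping $(\Adj,\des,\F)$ intact while forcing $\phi^2=\id$), but you resolve it only by asserting that ``once the pairing is set up correctly'' the rest is bookkeeping; no candidate pairing is exhibited, so there is nothing to check. The preliminary reduction is also unjustified: collapsing each maximal adjacency block into a super-letter changes the length and the positions of all later letters, hence changes $\MAJ$ and $\STAT$ by amounts depending on where the blocks sit; you would have to prove both that the quintuple of $\pi$ is recoverable from the reduced permutation together with the recorded block data, and --- much more delicately --- that an involution defined on the $\Adj=0$ class, after re-expansion of the blocks at their new positions, still exchanges $\MAJ$ with $\STAT$ and preserves $\Adj$, $\des$, $\F$. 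Neither claim is verified, and the second is far from automatic.

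For comparison, the actual argument (Burstein's, which this paper recalls and builds on) avoids all pattern-by-pattern bookkeeping by means of the complementary identity $\MAJ~\pi+\STAT~\pi=(n+1)\des~\pi-(\F~\pi-1)$ (Lemma~\ref{M+S} here). Thanks to it, one only needs an involution $p$ preserving $(\Adj,\des,\F)$ with $\MAJ~\pi+\MAJ~p(\pi)=(n+1)\des~\pi-(\F~\pi-1)$, i.e.\ a map sending $\MAJ$ to its complementary value; swapping $\MAJ$ and $\STAT$ then falls out formally. Such a $p$ is obtained globally, not by induction or local surgery: split $\pi$ by its first letter into the top subword (letters $>\pi_1$) and bottom subword (letters $<\pi_1$), apply reverse--complement to each (the top one after coding), and mirror the shuffle set as in \eqref{phish}; this visibly fixes $\F$, preserves adjacencies and descents, is an involution because reverse--complement is, and the descent-pairing computation of Lemma~\ref{M+M} type gives the complementary $\MAJ$ value. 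Your proposal misses this identity, which is the real engine of the proof, and without it the route you sketch would require constructing and verifying an intricate local correspondence that is not provided.
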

\begin{theorem}[Theorem~2 in \cite{KV}]\label{The2}$\ $\\
Statistics $(\Adj, \des, \ides, \F, \MAJ, \STAT)$ and
$(\Adj, \des, \ides, \F, \STAT, \MAJ)$ have the same joint distribution on $[m]^n$ for all $m$ and $n$.
\end{theorem}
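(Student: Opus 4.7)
The plan is to prove this equidistribution on $[m]^n$ by exhibiting an involution $\Phi : [m]^n \to [m]^n$ that preserves $(\Adj, \des, \ides, \F)$ and interchanges $\MAJ$ and $\STAT$. The natural strategy is to lift Burstein's involution on $\SS_n$ (the tool behind Theorem~\ref{thm-Bur}) to words via a standardization argument. Given $w \in [m]^n$, let $\sigma = \st(w) \in \SS_n$ denote its standardization (ties broken left-to-right) and $c(w) = (c_1, \ldots, c_m)$ its content vector, with $c_i = |\{j : w_j = i\}|$. The pair $(\sigma, c(w))$ determines $w$ uniquely, and I would tentatively define $\Phi(w)$ as the word corresponding to the pair $(B(\sigma), c(w))$, where $B : \SS_n \to \SS_n$ is Burstein's involution from \cite{Bur}.

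The heart of the argument is then to check, statistic by statistic, that
\[
(\Adj, \des, \ides, \F, \MAJ, \STAT)(w) = (\Adj, \des, \ides, \F, \STAT, \MAJ)(\Phi(w)).
\]
For $\des$ and $\MAJ$ this reduces at once to Theorem~\ref{thm-Bur} together with the classical compatibility of descents and the major index with standardization. For $\STAT$, defined on words in \cite{KV} as the extension of $\underline{21} + \underline{13}2 + \underline{21}3 + \underline{32}1$, one must carefully trace each of the four vincular patterns under standardization, paying attention to how equal letters contribute to each count. The remaining statistics $\Adj$, $\ides$, $\F$ require analogous compatibility checks, most straightforwardly by invoking the fact that Burstein's involution already preserves them on $\SS_n$ and then verifying that their word-level values are determined compatibly by the pair $(\sigma, c(w))$.

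There are two potential obstacles. The first is ensuring that $c(w)$ is a \emph{compatible} content for $B(\sigma)$, i.e., that there exists a word with standardization $B(\sigma)$ and content $c(w)$; this is a descent condition on the blocks forced by $c$, and while $B$ preserves $\des$ so that the fiber sizes of $\st$ over $\sigma$ and $B(\sigma)$ coincide, the precise sets of compatible contents need not match position-by-position, so one may have to refine the lift via a canonical fiber bijection. The second is the invariance of $\STAT$ under standardization in the presence of ties, since a naive reading of $\underline{13}2$, $\underline{21}3$, or $\underline{32}1$ can spuriously create or destroy occurrences when the ``middle'' letter equals one of the others. A short case analysis on each pattern, using the left-to-right convention, should settle this, after which Theorem~\ref{thm-Bur} delivers the full equidistribution on $[m]^n$; if the analysis fails in some case, one can fall back to a generating-function computation, expressing the joint distribution of the sextuple on $[m]^n$ as a $q$-multinomial style expression and checking directly that it is symmetric in $\MAJ$ and $\STAT$.
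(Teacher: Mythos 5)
There is a decisive obstruction to your approach. Your map $\Phi$ keeps the content vector $c(w)$ fixed, so it acts within each rearrangement class $R(w)$, and you claim it preserves $\Adj$ while exchanging $\MAJ$ and $\STAT$. Table~\ref{1122} of the paper shows this is impossible: in $R(1122)$ the word $1212$ has $(\Adj,\des,\ides,\F,\MAJ,\STAT)=(1,1,1,1,2,3)$, while the unique word of that class with $\MAJ=3$ is $1221$, which has $\Adj=0$. So no content-preserving involution can realize the sextuple exchange; the statement of Theorem~\ref{The2} is true on all of $[m]^n$ only because the underlying bijection is allowed to change the content (for instance $1212\mapsto 1121$ works, with $(\Adj,\des,\ides,\F,\MAJ,\STAT)(1121)=(1,1,1,1,3,2)$), and this is exactly why the present paper must drop $\Adj$ when refining to $R(w)$. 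Hence a standardization lift of Burstein's involution cannot prove the theorem as you propose, and the ``fall back to a $q$-multinomial computation'' is not an argument.

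Your first ``potential obstacle'' is also more serious than you suggest, and it is instructive to compare with how the paper handles its own (different) refinement, Corollary~\ref{main-Rw}. Which contents are compatible with a permutation $\sigma$ is governed by the \emph{inverse} descent set $Id(\sigma)$ (Proposition~\ref{charSRw}), not by $\des(\sigma)$, so your remark that ``$B$ preserves $\des$, hence the fiber sizes coincide'' does not give well-definedness of $(B(\sigma),c(w))$: Burstein's involution does not preserve $Id$, so in general there is no word with standardization $B(\sigma)$ and content $c(w)$. The paper's route is precisely to replace the reverse-complement ingredients of Burstein's map by Foata--Sch\"{u}tzenberger's RSK-based involution $\jmath$, which does preserve $Id$ (equation \eqref{jId}); this makes the standardization lift legitimate, at the cost of abandoning $\Adj$ and gaining the set-valued statistic $Id$ instead. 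If you want to prove Theorem~\ref{The2} itself, you should instead work directly on words over the full alphabet $[m]$ (as in \cite{KV}), using operations such as the alphabet complement that move between rearrangement classes, rather than a content-preserving standardization argument.
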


A quick look at Table~\ref{1122} reveals that when one wants to extend the above results to the rearrangement class $R(w)$, the statistic $\Adj$ must be dropped. In the meantime, the involution we are going to construct actually preserves the inverse descent set {\it Id} (see Definition~\ref{shuffle}). The following are the main results of this paper.

\begin{theorem}\label{main-Sn}$\ $\\
Statistics $(\des, Id, \F, \MAJ, \STAT)$ and
$(\des, Id, \F, \STAT, \MAJ)$ have the same joint distribution on $\SS_n$ for all $n$.
\end{theorem}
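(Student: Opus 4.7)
The goal is to construct an explicit involution $\phi$ on $\SS_n$ that preserves the triple $(\des, Id, \F)$ and interchanges $\MAJ$ and $\STAT$. Burstein's involution $\beta$, produced in the proof of Theorem~\ref{thm-Bur}, already accomplishes such an exchange while preserving the triple $(\des, \Adj, \F)$, so the only new work is to upgrade $\Adj$-preservation to $Id$-preservation. Following the strategy advertised in the abstract, I would build $\phi$ by conjugating $\beta$ with an RSK-based bijection $\Phi$ that trades the $Id$-partition of $\SS_n$ for its $\Adj$-partition while preserving $\des$, $\F$, $\MAJ$ and $\STAT$, and then set $\phi := \Phi^{-1} \circ \beta \circ \Phi$.

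The construction of $\Phi$ exploits the standard RSK identities $Id(\pi) = \DES(P(\pi))$ and $\DES(\pi) = \DES(Q(\pi))$: holding $Id$ fixed is equivalent to holding the descent set of the $P$-tableau fixed. A Foata-Sch\"utzenberger-style rearrangement, which switches the roles of $P$ and $Q$ (or a carefully restricted cousin of it), should turn $Id$-invariance into $\Adj$-invariance. Concretely, I would stratify $\SS_n$ by inverse descent set, apply such a rearrangement fiber-by-fiber, and then verify that the assembled map $\Phi$ preserves $(\des, \F, \MAJ, \STAT)$ and carries $Id$-fibers bijectively to unions of $\Adj$-fibers. Once this is in hand, $\phi^2 = \mathrm{id}$ follows from $\beta^2 = \mathrm{id}$; the exchange of $\MAJ$ and $\STAT$ and the preservation of $(\des, \F)$ by $\phi$ follow from the corresponding properties of $\beta$ together with $\Phi$'s invariance properties; and $\phi$ preserves $Id$ by design of $\Phi$.

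The principal obstacle, as I see it, is constructing $\Phi$ with its full invariance wish-list. Preserving $\STAT$ through an RSK-based rearrangement is inherently delicate, since $\STAT$ is defined from vincular patterns rather than from tableau moves, and the positional statistic $\F$ is even more of a nuisance for RSK-flavored constructions, which tend to scramble positional information. I would expect $\Phi$ to be defined piecewise on fibers of some refinement of $(\des, \F, \MAJ, \STAT)$, with the Foata-Sch\"utzenberger-type swap applied only within each piece. Checking that these pieces assemble into a single bijection that indeed trades $Id$ for $\Adj$, and not merely for some weaker invariant, is where the bulk of the technical effort will go. Once that intertwining lemma is established, the rest of the proof reduces to statistic-tracking bookkeeping along the three arrows $\Phi$, $\beta$, $\Phi^{-1}$.
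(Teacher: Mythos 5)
Your proposal defers the entire difficulty of the theorem to an unproven ``intertwining lemma,'' and that lemma, as you state it, cannot hold. You ask for a bijection $\Phi$ on $\SS_n$ preserving $(\des,\F,\MAJ,\STAT)$ and carrying each $Id$-fiber bijectively onto a union of $\Adj$-fibers. Since the images of distinct $Id$-fibers would be disjoint nonempty unions of $\Adj$-fibers, this would force at least as many nonempty $\Adj$-fibers as nonempty $Id$-fibers; but $Id$ ranges over all $2^{n-1}$ subsets of $[n-1]$ while $\Adj$ takes at most $n+1$ values, so this fails for every $n\geq 3$ (already in $\SS_3$: the four $Id$-fibers $\emptyset,\{1\},\{2\},\{1,2\}$ versus only three nonempty $\Adj$-fibers, for the values $0,1,3$). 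The RSK heuristic you invoke does not repair this: the identities $Id(\pi)=\DES(P_\pi)$ and $D(\pi)=\DES(Q_\pi)$ relate $Id$ to $\des$/$\ides$ under swapping $P$ and $Q$ (i.e.\ inverting $\pi$), but $\Adj$ is a local, value-adjacency statistic with no tableau-descent interpretation, so there is no reason a Foata--Sch\"utzenberger-style swap should ``turn $Id$-invariance into $\Adj$-invariance.'' Even the weaker property that would make the conjugation $\phi=\Phi^{-1}\circ\beta\circ\Phi$ preserve $Id$ (namely that $\Phi$ sends each $Id$-fiber to a $\beta$-invariant set) is exactly as hard as the theorem itself and is nowhere established.

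For contrast, the paper does not conjugate Burstein's map but rewires it internally. A permutation $\pi$ with $\F\,\pi=k$ is encoded by the triple (top subword $\pi^t$ of letters $>k$, bottom subword $\pi^b$ of letters $<k$, shuffle set $Sh(\pi)$); Burstein's involution applies reverse-complement to $\pi^t$ and $\pi^b$, and the paper instead applies the Foata--Sch\"utzenberger involution $\jmath(\sigma)=RSK^{-1}(P_\sigma,Q_{\sigma^{rc}})$ to (the codings of) these two subwords, keeping Burstein's rule for the shuffle set. The two properties of $\jmath$ that make this work are $Id(\jmath(\sigma))=Id(\sigma)$ and $D(\jmath(\sigma))=\{n-k:k\in D(\sigma)\}$: the first gives $Id(\phi(\pi))=Id(\pi)$ because $Id(\pi)$ depends only on $Id(\pi^t)$, $Id(\pi^b)$ and $\F\,\pi$, and the second drives a descent-pairing argument proving $\MAJ\,\pi+\MAJ\,\phi(\pi)=(n+1)\des\,\pi-(\F\,\pi-1)$, which combined with Burstein's identity $\MAJ\,\pi+\STAT\,\pi=(n+1)\des\,\pi-(\F\,\pi-1)$ yields $\MAJ\,\phi(\pi)=\STAT\,\pi$ and, since $\phi$ is an involution, the full exchange together with preservation of $\des$, $Id$, $\F$. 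To salvage your write-up you would need to abandon the $\Adj$-intertwining idea and work at the level of this block decomposition, where the $Id$-preserving, descent-complementing map $\jmath$ can be applied directly.
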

\begin{remark}
It is worth mentioning that in \cite{FS} it has been shown that $(Id, \MAJ, \INV)$ and
$(Id, \INV, \MAJ)$ are equidistributed on $\SS_n$ for all $n$.
\end{remark}

In view of this theorem and the standard ``coding/decoding'' between words and permutations \cite{CSZ}, we derive the following version for words, where the statistics have to be redefined on words accordingly. We defer the details to the next section.
\begin{corollary}\label{main-Rw}$\ $\\
Statistics $(\des, Id, \F, \MAJ, \STAT)$ and
$(\des, Id, \F, \STAT, \MAJ)$ have the same joint distribution on $R(w)$ for any word $w$.
\end{corollary}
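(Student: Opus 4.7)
The plan is to deduce Corollary~\ref{main-Rw} from Theorem~\ref{main-Sn} by means of the standardization map. Fix a word $w_0$ of length $n$ with content $\alpha=(\alpha_1,\ldots,\alpha_m)$, and let $\st\colon R(w_0)\to\SS_n$ send a word $v$ to the permutation obtained by relabelling, left-to-right, the occurrences of the letter $i$ in $v$ by the consecutive integers $\alpha_1+\cdots+\alpha_{i-1}+1,\ldots,\alpha_1+\cdots+\alpha_i$. Then $\st$ is an injection from $R(w_0)$ onto the descent class
$$\SS_n(\alpha)=\{\sigma\in\SS_n : \DES(\sigma^{-1})\subseteq S(\alpha)\},$$
where $S(\alpha)=\{\alpha_1,\alpha_1+\alpha_2,\ldots,\alpha_1+\cdots+\alpha_{m-1}\}$. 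The crucial observation is that membership in $\SS_n(\alpha)$ depends only on $\DES(\sigma^{-1})$, i.e., only on the statistic \textit{Id}.

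The first step is to verify that, for each statistic appearing in the signature $(\des,\textit{Id},\F,\MAJ,\STAT)$, the value on a word $v\in R(w_0)$ equals the value on its standardization $\st(v)$. This is precisely the purpose of the word-level definitions to be given in the next section: $\des$, $\MAJ$ and $\F$ depend only on strict comparisons $v_i<v_{i+1}$ and $v_i>v_{i+1}$, both preserved by $\st$, while equal adjacent letters $v_i=v_{i+1}$ become an ascent $\st(v)_i<\st(v)_{i+1}$ and contribute nothing to any of these three statistics. Compatibility of \textit{Id} follows directly from the block structure of the relabelling. For $\STAT$, one checks pattern by pattern, using its expression $\underline{21}+\underline{13}2+\underline{21}3+\underline{32}1$: each of the four vincular patterns involves only strict inequalities, so its occurrences in $v$ are in bijection with those in $\st(v)$.

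Once these five identifications are in place, invoke the involution $\Phi\colon\SS_n\to\SS_n$ produced in the proof of Theorem~\ref{main-Sn}. Because $\Phi$ preserves \textit{Id}, it preserves the condition $\DES(\sigma^{-1})\subseteq S(\alpha)$, and hence restricts to an involution $\Phi_\alpha\colon\SS_n(\alpha)\to\SS_n(\alpha)$ that swaps $\MAJ$ and $\STAT$ while fixing $\des$, \textit{Id} and $\F$. The conjugate $\st^{-1}\circ\Phi_\alpha\circ\st$ is then the desired involution on $R(w_0)$, which yields the asserted joint equidistribution.

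The only substantive point is the statistic-compatibility in the first step; I expect it to be routine once the word-definitions are made explicit, essentially because all inequalities in the defining vincular patterns of $\STAT$ are strict and $\st$ is order-preserving on distinct letters while breaking ties into ascents only.
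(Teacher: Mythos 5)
Your overall route is exactly the paper's: standardize (the paper's coding map $c$ is your $\st$), characterize the image of $R(w)$ inside $\SS_n$ by the condition that the inverse descent set is contained in $S(\alpha)$ (this is Proposition~\ref{charSRw}), and conjugate the $Id$-preserving involution of Theorem~\ref{main-Sn} by $\st$. Two of your compatibility claims, however, are wrong as stated. First, $\F$ is \emph{not} preserved by standardization: $\F\,v=v_1$ is a letter of the alphabet, while $\F\,\st(v)=\st(v)_1$ is its standardized value (e.g.\ $v=2\,1\,2\,2\,3\,1$ has $\F\,v=2$ but $\F\,\st(v)=3$); the paper explicitly singles out $\F$ as the one statistic not transported by the coding map. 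The conclusion you need still holds, but by a different argument: within a fixed class $R(w_0)$ the first letter of a word is determined by the first letter of its standardization (it is the letter whose block of codes contains $\st(v)_1$), and the involution of Theorem~\ref{main-Sn} fixes the first letter, so the conjugated map fixes $\F$ --- this is how the paper finishes its proof.

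Second, the paper (following Kitaev--Vajnovszki) defines $\STAT$ on words by the six-pattern expression $(\underline{21}3+\underline{21}2+\underline{13}2+\underline{12}1+\underline{32}1+\underline{21})$, not by the four strict patterns. Your claim that occurrences of the four patterns in $v$ are in bijection with occurrences in $\st(v)$ is false: an occurrence $v_i=v_j>v_{i+1}$ (pattern $\underline{21}2$) standardizes to an occurrence of $\underline{21}3$, and an occurrence of $\underline{12}1$ standardizes to one of $\underline{13}2$, so the strict four-pattern count genuinely increases under $\st$ when $v$ has repeated letters. Indeed, with the strict four-pattern definition the corollary itself would fail: on $R(1122)$ that count gives the multiset $\{0,1,1,1,1,2\}$, which is not equidistributed with $\MAJ$'s $\{0,1,2,2,3,4\}$. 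The correct compatibility statement is $\STAT\,v=\STAT\,\st(v)$ with the six-pattern word definition (the paper's equation \eqref{6w=6cw}), and once your pattern-by-pattern argument is replaced by this, the rest of your argument goes through and coincides with the paper's proof.
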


Since $Id$ uniquely determinates both $\ides$ and $\IMAJ$ (see Definition~\ref{shuffle}), the next corollary is a consequence of the previous one, and it is similar to Theorem~\ref{The2}, except
$\Adj$ is replaced by $\IMAJ$ and the equidistribution
is stated on the rearrangement classes.
\begin{corollary}$\ $\\
Statistics $(\IMAJ, \des, \ides, \F, \MAJ, \STAT)$ and
$(\IMAJ, \des, \ides, \F, \STAT, \MAJ)$ have the same joint distribution on $R(w)$ for any word $w$.
\end{corollary}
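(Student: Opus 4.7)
The plan is to derive this corollary directly from Corollary~\ref{main-Rw} by observing that the two extra statistics $\IMAJ$ and $\ides$ are themselves functions of the set-valued statistic $Id$. Unfolding Definition~\ref{shuffle}, on a word $u\in R(w)$ the inverse descent set $Id(u)$ is a subset of $\{1,\ldots,n-1\}$ attached to $u$ (via the standard coding as for permutations), and one has the elementary identities
\[
\ides(u)=|Id(u)|, \qquad \IMAJ(u)=\sum_{i\in Id(u)} i.
\]
Consequently any map on $R(w)$ that preserves $Id$ automatically preserves $\ides$ and $\IMAJ$ as well.

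Now Corollary~\ref{main-Rw} supplies exactly such a map: an involution $\phi\colon R(w)\to R(w)$ that keeps the triple $(\des, Id, \F)$ intact while swapping $\MAJ$ with $\STAT$. By the identities above, $\phi$ also fixes $\ides$ and $\IMAJ$, so for every $u\in R(w)$
\[
(\IMAJ,\des,\ides,\F,\MAJ,\STAT)(u)=(\IMAJ,\des,\ides,\F,\STAT,\MAJ)(\phi(u)).
\]
Summing the corresponding joint generating function over $u\in R(w)$ yields the claimed equidistribution on the whole rearrangement class.

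I do not foresee any genuine obstacle: this corollary is essentially a bookkeeping consequence of Corollary~\ref{main-Rw} once the two elementary identities above are recorded. The only point meriting a moment of care is checking that the word-level $Id$ from Definition~\ref{shuffle} does satisfy $\ides(u)=|Id(u)|$ and $\IMAJ(u)=\sum_{i\in Id(u)} i$ in the full word setting (and not only in the permutation case), but this follows immediately from the standard coding/decoding between words and permutations alluded to in the paragraph preceding Corollary~\ref{main-Rw}.
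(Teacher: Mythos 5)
Your proposal is correct and matches the paper's own (one-line) argument: the paper likewise deduces this corollary from Corollary~\ref{main-Rw} by noting that $Id$ determines both $\ides$ and $\IMAJ$, since by Definition~\ref{shuffle} one has $\ides = |Id|$ and $\IMAJ = \sum_{j\in Id} j$. The only cosmetic difference is that you invoke the explicit $Id$-preserving involution $\phi_{R(w)}$ from the proof of Corollary~\ref{main-Rw}, whereas the paper states the deduction directly at the level of joint distributions.
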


In the next section, we will first present the formal definitions of all the statistics that concern us here, and then explain the transition from Theorem~\ref{main-Sn} to Corollary~\ref{main-Rw}. Theorem~\ref{main-Sn} will be proved in Section~\ref{sec:involution}, where Foata-Sch\"{u}tzenberger's involution via RSK correspondence will be recalled. We conclude with some further questions suggested by this work.

\begin{table}[htbp]
\centering \caption{The joint distribution of $7$-tuple statistics on $R(1122)$}
\begin{tabular}{|c||c|c|c|c|c|c|c|}
\hline
$R(1122)$ & $\Adj$ & $\des$ & $\ides$ & $\F$ & $\IMAJ$ & $\MAJ$ & $\STAT$\\
\hline
1122 & 0 & 0 & 0 & 1 & 0 & 0 & 0\\
\hline
1212 & 1 & 1 & 1 & 1 & 2 & 2 & 3\\
\hline
1221 & 0 & 1 & 1 & 1 & 2 & 3 & 2\\
\hline
2112 & 0 & 1 & 1 & 2 & 2 & 1 & 2\\
\hline
2121 & 0 & 2 & 1 & 2 & 2 & 4 & 4\\
\hline
2211 & 0 & 1 & 1 & 2 & 2 & 2 & 1\\
\hline
\end{tabular}
\label{1122}
\end{table}

\section{Preliminaries}\label{sec:pre}
Since all the statistics that concern us here can be defined on both permutations and words, and that we have two versions of joint equidistribution results, some care needs to be taken to avoid unnecessary repetitions. We decide to define all the statistics on the rearrangement class $R(w)$ for a word $w$ of length $n$, making sure that when $w$ is composed of distinct letters, and thus $R(w)$ becomes essentially $\SS_n$, all the statistics reduce to the original ones defined on $\SS_n$. On the other hand, for the two equidistribution results we first explain how to derive Corollary~\ref{main-Rw} from Theorem~\ref{main-Sn} and then construct, in the next section, the key involution on $\SS_n$.

We consider words $w=w_1w_2\cdots w_n$ on a totally ordered alphabet $\mathscr{A}$. Without loss of generality, we can always take $\mathscr{A}$ to be the interval $[m]=\{1,2,\ldots,m\}$. By \emph{rearrangement class} $R(w)$, we mean the set of all words that can be obtained by permuting the letters of $w$. Within each class $R(w)$, there is a unique non-decreasing word that we denote by $\overline{w}$. If the letters of $w$ are distinct, then $R(w)$ is in bijection with the symmetric group $\SS_n$ via the following ``coding'' map \cite{CSZ}.
\begin{Def}\label{c}
Given a word $w=w_1w_2\cdots w_n$, we \emph{code} it by the permutation $c(w)=c(w)_1c(w)_2\cdots c(w)_n\in\SS_n$ uniquely defined by
\begin{align*}
c(w)_i < c(w)_j & \text{ if and only if } \begin{cases} w_i<w_j, & \text{or}\\ w_i=w_j, & \text{for }i<j.\end{cases}
\end{align*}
This gives rise to a map $c:[m]^n\rightarrow \SS_n$, which we call \emph{the coding map}, and clearly $c$ is a 
surjection if and only if $m\geq n$.
\end{Def}
For example, the word $w=2~1~2~2~3~1$ is coded by $c(w)=3~1~4~5~6~2$. Now we are ready to define all seven statistics encountered in the introduction. The true or false function $\chi$ is used in the definition of $\Adj$ for our 
convenience\textcolor{red}{:} $\chi(S)=1$ (resp.~$\chi(S)=0$) if the statement $S$ is true (resp. false).

\begin{Def}\label{shuffle}
Let $w=w_1w_2\cdots w_n$ be a word. The \emph{descent set of $w$}, $D(w)$, the \emph{inverse descent set of $w$}, $Id(w)$, and a peculiar index set (needed in Section~\ref{sec:involution}) that we call the \emph{shuffle set of $w$}, $Sh(w)$ are set-valued statistics defined as follows:
\begin{align*}
D(w)&=\{i: w_i>w_{i+1}, 1\leq i<n\},\\
Id(w)&=\{c(w)_i: \exists~j<i, \text{ s.t. } c(w)_j=c(w)_i+1\},\\
Sh(w)&=\{i: w_i\geq w_1>w_{i+1} \text{ or }w_i< w_1\leq w_{i+1}, 1\leq i<n\}.
\end{align*}
We define the following seven statistics on $w$:
\begin{align*}
\F~w &= w_1,\quad \des~w = \sum_{j\in D(w)}1,\quad
\ides~w = \sum_{j\in Id(w)}1,\\
\Adj~w &= \sum_{j=1}^{n}\chi(c(w)_j=c(w)_{j+1}+1), \text{ take }c(w)_{n+1}=0,\\
\MAJ~w &=\sum_{j\in D(w)}j=(1\underline{32}+1\underline{21}+2\underline{31}+2\underline{21}+3\underline{21}+\underline{21})w,\\
\IMAJ~w &= \sum_{j\in Id(w)}j,\\
\STAT~w &= (\underline{21}3+\underline{21}2+\underline{13}2+\underline{12}1+\underline{32}1+\underline{21})w.
\end{align*}
\end{Def}
We note that $\ides, \Adj, \MAJ, \STAT$ all have been extended to words in \cite{KV}. Some of the definitions given here may look different but essentially are equivalent to those used in \cite{KV}, and our formulation of $\ides$ makes it more convenient to define $\IMAJ$ on words.

When the word $w$ happens to be a permutation in $\SS_n$, then $c(w)=w$ and all the above statistics agree with their original counterparts defined on permutations. This hopefully justifies our abuse of notation, i.e., we use the same name for these statistics, both on words and permutations. Moreover, it should be routine to verify that the coding map $c$ preserves six statistics defined above, with $\F$ being the only exception.
\begin{align}\label{6w=6cw}
(\Adj, \des, Id, \MAJ, \STAT)w &=(\Adj, \des, Id, \MAJ, \STAT)c(w).
\end{align}
For the previous example with $w=2~1~2~2~3~1$ and $c(w)=3~1~4~5~6~2$, we see $\des~w=2=\des~c(w)$, $Id~w=\{2\}=Id~c(w)$, $\MAJ~w=6=\MAJ~c(w)$, etc.

In order to deduce Corollary~\ref{main-Rw} from Theorem~\ref{main-Sn}, we need to pay some attention to the ``decoding'' map from $\SS_n$ back to $[m]^n$. In general, the preimage of a given permutation under the coding map $c$ is not unique and therefore this decoding is not well defined. For instance, for both $w=2~1~2~2~3~1$ and $v=2~1~2~3~3~1$, we see $c(w)=c(v)=3~1~4~5~6~2$, with $R(w)\neq R(v)$. This should be somewhat expected since when $w$ contains repeated letters, we have $|R(w)|<|\SS_n|$. To deal with this, we introduce the notion of compatible permutations.
\begin{Def}
Given the rearrangement class $R(w)$ for a word $w$ of length $n$, we denote $\SS_{R(w)}$ the subset of $\SS_n$ that is the image of $R(w)$ under the coding map $c$, and we call it the set of \emph{compatible permutations with respect to $R(w)$}. More precisely,
$$
\SS_{R(w)}=\{\pi\in\SS_n: \pi=c(v), \text{ for some }v\in R(w)\}.
$$
\end{Def}
It is clear from Definition~\ref{c} that if we restrict $c$ on a rearrangement class $R(w)$, it is injective, hence it induces a bijection between $R(w)$ and $\SS_{R(w)}$, with a unique inverse $c^{-1}$ defined on $\SS_{R(w)}$:
\begin{align}
c=c|_{R(w)}:& \quad R(w)\rightarrow \SS_{R(w)},\nonumber\\
c^{-1}:& \quad \SS_{R(w)}\rightarrow R(w),\nonumber\\
\label{dc=1}c^{-1}\circ c(v)&=v, \quad\forall v\in R(w),\\
\label{cd=1}c\circ c^{-1}(\pi)&=\pi, \quad\forall \pi\in \SS_{R(w)}.
\end{align}

The next proposition characterizes $\SS_{R(w)}$ using $\overline{w}$ and the inverse descent set.

\begin{proposition}\label{charSRw}
Given $R(w)$ and suppose $\overline{w}=u_1\cdots u_{b_1}u_{b_1+1}\cdots u_{b_2}\cdots u_{b_k+1}\cdots u_{n}$, where $1\leq b_1<b_2<\cdots<b_k\leq n$ and
$$
u_1=\cdots=u_{b_1}<u_{b_1+1}=\cdots=u_{b_2}<\cdots<u_{b_k+1}=\cdots=u_n.
$$
Then $\pi\in\SS_{R(w)}$ if and only if $Id(\pi)\subseteq \{b_1, b_2,\ldots, b_k\}$.
\end{proposition}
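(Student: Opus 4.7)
The plan is to unpack the definition of the coding map $c$ and see what it means for a permutation $\pi$ to arise as $c(v)$ for some $v\in R(w)$. The key observation is that within each block of equal letters in $v$, the map $c$ assigns the consecutive ranks in left-to-right order. More concretely, setting $b_0:=0$ and $b_{k+1}:=n$, if $v\in R(w)$ contains $b_i-b_{i-1}$ copies of the $i$-th smallest distinct letter $u_{b_i}$, then by Definition~\ref{c} the positions of $v$ holding this letter receive, read from left to right, the ranks $b_{i-1}+1,b_{i-1}+2,\ldots,b_i$. Hence in $\pi=c(v)$ the values $b_{i-1}+1,\ldots,b_i$ necessarily appear in increasing order of position.

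The next step is to rephrase this ``increasing within each value-block'' property in terms of $Id(\pi)$. Since $\pi$ is a permutation, $c(\pi)=\pi$, and by Definition~\ref{shuffle} one has $j\in Id(\pi)$ iff $\pi^{-1}(j+1)<\pi^{-1}(j)$, i.e., iff the value $j+1$ precedes the value $j$ in $\pi$. Therefore the values in the block $\{b_{i-1}+1,\ldots,b_i\}$ appear in increasing order of position iff $j\notin Id(\pi)$ for every $j$ with $b_{i-1}<j<b_i$. Combining this over $i=1,\ldots,k+1$, the increasing-within-blocks property becomes $Id(\pi)\cap\bigl([n-1]\setminus\{b_1,\ldots,b_k\}\bigr)=\emptyset$, which is exactly $Id(\pi)\subseteq\{b_1,\ldots,b_k\}$. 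This already settles the ``only if'' direction.

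For the converse, given $\pi$ with $Id(\pi)\subseteq\{b_1,\ldots,b_k\}$, I would explicitly exhibit a preimage by defining $v_j:=u_{\pi_j}$. Then $v\in R(w)$ is immediate since the letter multiset of $v$ equals that of $\overline{w}$. To verify $c(v)=\pi$, observe that the hypothesis on $Id(\pi)$ forces the values of $\pi$ belonging to each value-block $\{b_{i-1}+1,\ldots,b_i\}$ to occur in increasing order of position; but these are precisely the positions of $v$ holding the letter $u_{b_i}$, so the ranking rule in Definition~\ref{c} assigns the rank $\pi_j$ to position $j$, giving $c(v)=\pi$. No serious obstacle arises here: the whole argument is a careful bookkeeping between the letter-blocks of $\overline{w}$, the value-blocks they induce in $\pi$, and the relative-order information captured by $Id$, with the only delicate point being the correct treatment of the boundary cases $i=1$ and $i=k+1$.
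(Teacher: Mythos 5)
Your proof is correct, and it differs from the paper's in a genuine way on the ``if'' direction. The paper handles that direction by a cardinality argument: having observed (essentially without elaboration) that $\SS_{R(w)}\subseteq\{\pi\in\SS_n: Id(\pi)\subseteq\{b_1,\ldots,b_k\}\}$, it notes that both sets are enumerated by the multinomial coefficient $\binom{n}{b_1,b_2-b_1,\ldots,n-b_k}=|R(w)|$, forcing equality. You instead exhibit an explicit preimage, $v_j:=u_{\pi_j}$, and verify $c(v)=\pi$ directly from the block structure; in effect you construct the decoding map $c^{-1}$ concretely, whereas the paper only defines it abstractly as the inverse of the injection $c|_{R(w)}$. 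Your route buys a self-contained, constructive argument that does not rely on knowing the count of permutations with inverse descent set contained in a prescribed set, and it also fully justifies the ``only if'' step (the translation between ``values of each block increase from left to right'' and $Id(\pi)\cap\bigl([n-1]\setminus\{b_1,\ldots,b_k\}\bigr)=\emptyset$) that the paper states with a bare ``consequently.'' The paper's counting argument, in turn, is shorter once one accepts those two standard enumerative facts. Your bookkeeping at the block boundaries, including the conventions $b_0=0$, $b_{k+1}=n$, is handled correctly.
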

\begin{proof}
Take a permutation $\pi\in\SS_{R(w)}$, then $\pi=c(v)$ for some $v\in R(w)$, and consequently $Id(\pi)\subseteq \{b_1, b_2,\ldots, b_k\}$, this takes care of the ``only if'' part. We finish the proof by simply noting that both $\SS_{R(w)}$ and $\{\pi\in\SS_n:Id(\pi)\subseteq \{b_1, b_2,\ldots, b_k\}\}$ are enumerated by $|R(w)|=\binom{n}{b_1,b_2-b_1,\ldots,n-b_k}$.
\end{proof}


In the next section, we are going to construct an involution $\phi$ on $\SS_n$ such that for any $\pi\in\SS_n$,
\begin{align}\label{phistats}
(\des, Id, \F, \MAJ, \STAT)\pi &=(\des, Id, \F, \STAT, \MAJ)\phi(\pi).
\end{align}
This proves Theorem~\ref{main-Sn}. Now consider the function composition
\begin{align*}
\phi_{R(w)}&=c^{-1}\circ \phi \circ c.
\end{align*}
Note that for any $v\in R(w)$, $c(v)\in \SS_{R(w)}$, and since $Id(\phi(c(v)))=Id(c(v))$, we have $\phi(c(v))\in\SS_{R(w)}$ thanks to Proposition~\ref{charSRw}, thus $\phi_{R(w)}: R(w)\rightarrow R(w)$ is well-defined. Now we show that $\phi_{R(w)}$ is an involution on $R(w)$ that proves Corollary~\ref{main-Rw}.
\begin{proof}[Proof of Corollary~\ref{main-Rw}]
We first prove that $\phi_{R(w)}$ is indeed an involution. Let $v\in R(w)$, we have
\begin{align*}
\phi_{R(w)}^2(v)&=(c^{-1}\circ \phi \circ c)\circ(c^{-1}\circ \phi \circ c)v\\
&=(c^{-1}\circ \phi)\circ(c\circ c^{-1})\circ(\phi \circ c)v\\
\text{\eqref{cd=1}}\qquad\qquad\quad &=(c^{-1}\circ \phi)\circ(\phi \circ c)v\\
&=(c^{-1}\circ(\phi\circ\phi)\circ c)v\\
\phi \text{ is an involution }\quad\quad &=(c^{-1}\circ c)v\\
\text{\eqref{dc=1}} \qquad \qquad \quad &=v.
\end{align*}
Next we apply \eqref{6w=6cw} (twice) and \eqref{phistats} to get
\begin{align*}
(\des, Id, \MAJ, \STAT)v &=(\des, Id, \STAT, \MAJ)\phi_{R(w)}(v).
\end{align*}
Finally it is straightforward to check that $\F~v=\F~\phi_{R(w)}(v)$ since $\F~\pi=\F~\phi(\pi)$ by relation \eqref{phistats}. This immediately establishes Corollary~\ref{main-Rw} and ends the proof.
\end{proof}

\section{Involution $\phi$}\label{sec:involution}
In this section we aim to construct the aforementioned map $\phi$ from $\SS_n$ into itself, and we show that i) it is an involution and ii) it satisfies \eqref{phistats}. First off, we recall an involution $\jmath$ defined on $\SS_n$ that was first considered by Sch\"{u}tzenberger \cite{Schu} and further explored by Foata-Sch\"{u}tzenberger \cite{FS}. It builds on two permutation symmetries, namely \emph{reversal} r and \emph{complement} c, as well as the famous Robinson-Schensted-Knuth (RSK) algorithm \cite[$\S$7.11]{Stan}. We collect the relevant definitions and notations below. Given a permutation $\pi=\pi_1\pi_2\cdots \pi_n\in\SS_n$,
\begin{align*}
&\pi^r=\pi_1^r\pi_2^r\cdots\pi_n^r, \text{ with } \pi^{r}_i=\pi_{n+1-i}, \text{ for }1\leq i\leq n,\\
&\pi^c=\pi_1^c\pi_2^c\cdots\pi_n^c, \text{ with } \pi^{c}_i=n+1-\pi_i, \text{ for }1\leq i\leq n,\\
&\pi^{rc}=\pi_1^{rc}\pi_2^{rc}\cdots\pi_n^{rc}, \text{ with } \pi^{rc}_i=n+1-\pi_{n+1-i}, \text{ for }1\leq i\leq n,\\
&\pi\stackrel{RSK}{\longrightarrow} (P_{\pi},Q_{\pi}) \stackrel{RSK^{-1}}{\longrightarrow} \pi,
\end{align*}
where $RSK$ is a bijection between $\SS_n$ and the set of ordered pairs of $n$-cell standard Young tableaux (SYT) with the same shape. Note that $\pi\mapsto \pi^r$, $\pi\mapsto \pi^c$ and $\pi\mapsto \pi^{rc}=\pi^{cr}$ are involutions on $\SS_n$.

We usually call $P_{\pi}$ the \emph{insertion tableau} and $Q_{\pi}$ the \emph{recording tableau} of $\pi$. In general, for two unrelated permutations $\pi,\sigma\in\SS_n$, $P_{\pi}$ and $Q_{\sigma}$ are unlikely to be of the same shape. However, it was noted in \cite[Prop.~5.2]{FS} that $P_{\pi}$ and $Q_{\pi^{rc}}$ are indeed of the same shape, so the following map $\jmath$ is a well-defined involution on $\SS_n$.
\begin{align*}
\jmath:\;& \SS_n\rightarrow\SS_n\\
& \pi\mapsto RSK^{-1}(P_{\pi},Q_{\pi^{rc}})
\end{align*}
The next property of $\jmath$ is of great importance for our later use and is the main reason that urged us to involve $\jmath$ in our construction of $\phi$.
\begin{lemma}[Theorem~2 in \cite{FS}]
For any $\pi\in\SS_n$, the map $\jmath$ as defined above preserves the inverse descent set $Id(\pi)$ and exchanges the descent set $D(\pi)$ with its complement to $n$. In other words one has simultaneously:
\begin{align}
\label{jId}Id(\jmath(\pi))&=Id(\pi), \text{ and}\\
\label{jD}D(\jmath(\pi))&=\{n-k: k\in D(\pi)\}.
\end{align}
\end{lemma}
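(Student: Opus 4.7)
My plan is to reduce the lemma to two classical properties of the RSK correspondence: for any $\sigma\in\SS_n$ with RSK-image $(P_\sigma,Q_\sigma)$, one has $D(\sigma)=D(Q_\sigma)$ and $D(\sigma^{-1})=D(P_\sigma)$, where the \emph{descent set} of a standard Young tableau $T$ means $\{i:i+1 \text{ lies strictly below } i \text{ in }T\}$. The second identity follows from the first via the RSK symmetry $(P_{\sigma^{-1}},Q_{\sigma^{-1}})=(Q_\sigma,P_\sigma)$, and both are standard (Stanley~\cite[\S7.23]{Stan}).

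For the invariance of the inverse descent set \eqref{jId}, I would argue as follows. Since $\jmath(\pi)=RSK^{-1}(P_\pi,Q_{\pi^{rc}})$, the insertion tableau of $\jmath(\pi)$ is $P_\pi$ itself, so
\begin{equation*}
D(\jmath(\pi)^{-1})=D(P_{\jmath(\pi)})=D(P_\pi)=D(\pi^{-1}).
\end{equation*}
It then remains to identify $Id(\sigma)$ with $D(\sigma^{-1})$ for every $\sigma\in\SS_n$: from Definition~\ref{shuffle}, $k\in Id(\sigma)$ exactly when $k$ appears to the right of $k+1$ in $\sigma$, which is the statement $\sigma^{-1}_k>\sigma^{-1}_{k+1}$, i.e.\ $k\in D(\sigma^{-1})$. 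Combining the two yields $Id(\jmath(\pi))=Id(\pi)$.

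For the descent-set swap \eqref{jD}, the recording tableau of $\jmath(\pi)$ is by construction $Q_{\pi^{rc}}$, hence $D(\jmath(\pi))=D(Q_{\pi^{rc}})=D(\pi^{rc})$. The rest is a one-line computation from the definition $(\pi^{rc})_i=n+1-\pi_{n+1-i}$: the inequality $(\pi^{rc})_i>(\pi^{rc})_{i+1}$ is equivalent to $\pi_{n-i}>\pi_{n+1-i}$, i.e.\ to $n-i\in D(\pi)$. Therefore $D(\pi^{rc})=\{n-k:k\in D(\pi)\}$, as claimed.

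The main obstacle I anticipate is not logical but notational: one must pin down RSK conventions (row- versus column-insertion, the orientation of rows in the English/French picture, the precise definition of descents in a standard Young tableau) so that the two RSK descent-preservation identities take the exact form used above. Once those conventions are locked in, everything else reduces to the short assembly in the second paragraph and the elementary reindexing on $\pi^{rc}$ in the third.
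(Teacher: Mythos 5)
Your proof is correct. The paper itself does not prove this lemma --- it is quoted as Theorem~2 of Foata--Sch\"utzenberger \cite{FS} --- and your argument is precisely the standard route that underlies the cited result: the RSK descent identities $D(\sigma)=D(Q_\sigma)$ and $D(\sigma^{-1})=D(P_\sigma)$ (the latter via the symmetry $(P_{\sigma^{-1}},Q_{\sigma^{-1}})=(Q_\sigma,P_\sigma)$), the observation that the paper's $Id(\sigma)$ is exactly $D(\sigma^{-1})$, and the elementary reindexing $D(\pi^{rc})=\{n-k:k\in D(\pi)\}$; all steps check out, including the implicit use of the fact (noted in the paper via \cite[Prop.~5.2]{FS}) that $P_\pi$ and $Q_{\pi^{rc}}$ have the same shape so that $\jmath$ is well defined.
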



Now given a permutation $\pi\in\SS_n$ with $\F~\pi=k$, let us denote as $\pi^t=\pi_{t_1}\cdots\pi_{t_{n-k}}$ (resp. $\pi^b=\pi_{b_1}\cdots\pi_{b_{k-1}}$) the \emph{top} (resp. \emph{bottom}) subword that is composed of all the letters larger (resp. smaller) than $k$. We also need the shuffle set $Sh(\pi)$ defined in Definition~\ref{shuffle}. For example, let $\pi=5~4~6~7~3~1~9~8~2$, then $\pi^t=6~7~9~8$, $\pi^b=4~3~1~2$ and $Sh(\pi)=\{1,2,4,6,8\}$. Conversely, it should be clear how to recover the unique permutation $\pi$ that corresponds to an appropriate triple $(\pi^t,\pi^b,Sh(\pi))$, so we omit the details. We are ready to describe our key map $\phi$ using this defining triple: (top subword, bottom subword, shuffle set).
\begin{Def}
Given $\pi\in\SS_n$, let $\phi(\pi)$ be the unique permutation that corresponds to $(\phi(\pi)^t,\phi(\pi)^b,Sh(\phi(\pi)))$, where
\begin{align}
\label{phit}\phi(\pi)^t &:= c^{-1}\circ \jmath \circ c(\pi^t),\\
\label{phib}\phi(\pi)^b &:= \jmath(\pi^b),\\
\label{phish}Sh(\phi(\pi)) &:= \begin{cases}\{1\}\cup\{n+1-k:k\geq 2, k\in Sh(\pi)\} & \text{ if }|Sh(\pi)|\text{ is odd,}\\
\{n+1-k:k\geq 2, k\in Sh(\pi)\} & \text{ if }|Sh(\pi)|\text{ is even.}
 \end{cases}
\end{align}
\end{Def}
\begin{remark}
It should be noted that the idea to associate $\pi$ with the triple $(\pi^t,\pi^b,Sh(\pi))$ is motivated by Burstein's involution $p$ introduced in \cite{Bur} to prove Theorem~\ref{thm-Bur}. Actually in our setting, Burstern's $p$ can be defined using the same \eqref{phish}, together with \eqref{phit} and \eqref{phib} replaced by
\begin{align*}
p(\pi)^t &= c^{-1}((c(\pi^t))^{rc}), \text{ and }\\
p(\pi)^b &= (\pi^b)^{rc}.
\end{align*}
Moreover, in view of \eqref{jId} and Proposition~\ref{charSRw}, \eqref{phit} is well-defined. Thirdly, by the above definition, it follows that $\F~\phi(\pi)=\F~\pi$. Lastly, using the fact that $\jmath$ is an involution, it is routine (using the associated triple) to check that $\phi^2(\pi)=\pi$ for any $\pi\in\SS_n$, i.e., $\phi$ is an involution on $\SS_n$ as claimed.
\end{remark}

All remained to be done is to prove \eqref{phistats}. Namely, we need to show for every $\pi\in\SS_n$:
\begin{align*}
\des~\phi(\pi)&=\des~\pi,\\
Id(\phi(\pi))&=Id(\pi),\\
\F~\phi(\pi)&=\F~\pi,\\
\STAT~\phi(\pi)&=\MAJ~\pi,\\
\MAJ~\phi(\pi)&=\STAT~\pi.
\end{align*}
We break it up into the following three lemmas and one corollary.
\begin{lemma}
$Id(\phi(\pi))=Id(\pi).$
\end{lemma}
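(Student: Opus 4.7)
The plan is to split the analysis of $Id(\pi)$ according to how $v$ and $v+1$ are positioned relative to the first letter $k=\F(\pi)$, and then exploit the known properties of $\jmath$ and of the coding map $c$.

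First I would record the following case analysis for any $\pi\in\SS_n$ with $\F(\pi)=k$. Since $\pi_1=k$, the value $k+1$ necessarily appears to the right of $k$, so $k\notin Id(\pi)$; conversely (when $k>1$) the value $k-1$ appears somewhere to the right of $k$, so $k-1\in Id(\pi)$. For $v>k$ both $v$ and $v+1$ belong to $\pi^t$, and for $v<k-1$ both belong to $\pi^b$; in either case the relative order of $v$ and $v+1$ in $\pi$ coincides with their relative order in the respective subword. Consequently
\[
Id(\pi)=\{k-1\}\;\sqcup\;\bigl\{v<k-1:v\in Id(\pi^b)\bigr\}\;\sqcup\;\bigl\{v>k:v-k\in Id(c(\pi^t))\bigr\},
\]
where I used the fact that the coding map $c$ acts on $\pi^t$ as the order-preserving shift $v\mapsto v-k$, so $c$ identifies $Id(\pi^t)$ with $Id(c(\pi^t))$ after this shift, while $\pi^b$ is already a bona fide permutation of $[k-1]$.

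Next I would carry out the identical decomposition for $\phi(\pi)$. By the remark following the definition of $\phi$, we have $\F(\phi(\pi))=\F(\pi)=k$, so the same trichotomy applies: $k-1\in Id(\phi(\pi))$, $k\notin Id(\phi(\pi))$, and the remaining indices are governed separately by $\phi(\pi)^b$ and $\phi(\pi)^t$. Thus it suffices to prove the two identities $Id(\phi(\pi)^b)=Id(\pi^b)$ and $Id(c(\phi(\pi)^t))=Id(c(\pi^t))$.

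For the bottom subword this is immediate from \eqref{phib}: $\phi(\pi)^b=\jmath(\pi^b)$, and $\jmath$ preserves the inverse descent set by \eqref{jId}. For the top subword, apply $c$ to \eqref{phit} and use \eqref{6w=6cw} (which asserts that $c$ preserves $Id$) together with \eqref{cd=1} to get $c(\phi(\pi)^t)=\jmath(c(\pi^t))$; invoking \eqref{jId} once more yields $Id(c(\phi(\pi)^t))=Id(c(\pi^t))$. Combining everything gives $Id(\phi(\pi))=Id(\pi)$.

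The only conceptual hurdle is making sure that the shuffle set $Sh(\pi)$, which controls how the subwords $\pi^t$ and $\pi^b$ are interleaved around $\pi_1=k$, plays no role: this is because membership of $v\ne k,k-1$ in $Id(\pi)$ depends only on the order of $v$ and $v+1$ within whichever subword they both inhabit, and not on their absolute positions in $\pi$. Once this observation is made, the remainder of the argument is a straightforward transcription of the defining equations \eqref{phit} and \eqref{phib} through the two invariance properties of $\jmath$ and $c$.
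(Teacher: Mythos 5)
Your proof is correct and follows essentially the same route as the paper: the paper's proof consists precisely of the observation that, once $\F(\pi)=k$ is fixed, $Id(\pi)$ is determined by $Id(\pi^b)$ and $Id(\pi^t)$ (equivalently $Id(c(\pi^t))$ shifted by $k$) and is independent of $Sh(\pi)$, after which \eqref{jId} applied to \eqref{phit} and \eqref{phib} finishes the argument. You have simply spelled out the trichotomy ($k\notin Id$, $k-1\in Id$ when $k>1$, and the two shifted blocks) that the paper leaves implicit.
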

\begin{proof}
First note that when the first letter $\F~\pi$ is fixed, $Id(\pi)$ only depends on $Id(\pi^t)$ and $Id(\pi^b)$, and is not affected by $Sh(\pi)$ at all. Then we simply use \eqref{jId} to finish the proof.
\end{proof}

\begin{lemma}\label{M+S}
$\MAJ~\pi+\STAT~\pi=(n+1)\des~\pi-(\F~\pi-1)$.
\end{lemma}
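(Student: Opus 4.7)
The plan is to reduce the identity, via two bookkeeping decompositions, to a single telescoping identity. First I would rewrite the target as $\STAT\,\pi = (n+1)\des\,\pi - \MAJ\,\pi - (\F\,\pi - 1)$ and observe
\[
(n+1)\des\,\pi - \MAJ\,\pi \;=\; \sum_{j \in D(\pi)}(n+1-j) \;=\; \des\,\pi + \mathrm{comaj}\,\pi,
\]
where $\mathrm{comaj}\,\pi := \sum_{j \in D(\pi)}(n-j)$. Since $\pi \in \SS_n$ has distinct letters, the word-only patterns $\underline{21}2$ and $\underline{12}1$ in the definition of $\STAT$ contribute nothing, so $\STAT\,\pi = \des\,\pi + (\underline{21}3 + \underline{13}2 + \underline{32}1)\pi$. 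The identity to prove thus becomes
\[
(\underline{21}3 + \underline{13}2 + \underline{32}1)\pi \;=\; \mathrm{comaj}\,\pi - (\F\,\pi - 1).
\]

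Next I would decompose $\mathrm{comaj}\,\pi$ combinatorially. For each descent position $j$, write $n - j = 1 + \#\{k : j+1 < k \le n\}$ and classify each such $k$ by the position of $\pi_k$ relative to the pair $\pi_j > \pi_{j+1}$: the three cases $\pi_k > \pi_j$, $\pi_{j+1} < \pi_k < \pi_j$, and $\pi_k < \pi_{j+1}$ correspond precisely to occurrences of $\underline{21}3$, $\underline{31}2$, and $\underline{32}1$ whose underlined pair sits at $(j, j+1)$. Summing over $j \in D(\pi)$ gives
\[
\mathrm{comaj}\,\pi \;=\; \des\,\pi + (\underline{21}3 + \underline{31}2 + \underline{32}1)\pi,
\]
which, when substituted in, simplifies the desired identity to the clean statement
\[
(\underline{13}2)\pi - (\underline{31}2)\pi \;=\; \des\,\pi - (\F\,\pi - 1).
\]

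Finally, I would establish this last identity by a telescoping argument. For a fixed letter $c = \pi_k$, define $g(j) := \chi(\pi_j > c)$ for $1 \le j \le k-1$. Then an $\underline{13}2$ occurrence whose third letter sits at position $k$ corresponds to some $1 \le j \le k-2$ with $(g(j), g(j+1)) = (0, 1)$, and an $\underline{31}2$ occurrence to $(1, 0)$. Hence the signed count of such occurrences ending at position $k$ equals $\sum_{j=1}^{k-2}\bigl(g(j+1)-g(j)\bigr) = \chi(\pi_{k-1} > \pi_k) - \chi(\pi_1 > \pi_k)$. Summing over $k = 2, \ldots, n$, the first term yields $\des\,\pi$, and the second yields $\F\,\pi - 1$ because exactly the $\pi_1 - 1$ values smaller than $\pi_1$ appear among $\pi_2, \ldots, \pi_n$.

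Once the two decompositions are in place the argument is essentially bookkeeping, so I do not foresee a serious obstacle. The only point worth watching is the cancellation of the word-only patterns $\underline{21}2$ and $\underline{12}1$ in the definition of $\STAT$; this is precisely why the lemma is stated on $\SS_n$, and why the analogous identity on a general rearrangement class $R(w)$ would require a finer accounting.
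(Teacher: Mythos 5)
Your proof is correct, and it takes a genuinely different route from the paper: the paper does not prove this lemma at all, but simply cites it as Lemma~2.7 of Burstein's paper \cite{Bur}, whereas you give a complete, self-contained argument. Your chain of reductions checks out: since $\pi\in\SS_n$ has distinct letters, $\underline{21}2$ and $\underline{12}1$ indeed contribute nothing and $\underline{21}$ counts $\des\,\pi$; the rewriting $(n+1)\des\,\pi-\MAJ\,\pi=\des\,\pi+\mathrm{comaj}\,\pi$ with $\mathrm{comaj}\,\pi=\sum_{j\in D(\pi)}(n-j)$ is right; and the classification of each position $k>j+1$ against a descent pair $(\pi_j,\pi_{j+1})$ into exactly one of $\underline{21}3$, $\underline{31}2$, $\underline{32}1$ (using distinctness) correctly gives $\mathrm{comaj}\,\pi=\des\,\pi+(\underline{21}3+\underline{31}2+\underline{32}1)\pi$, reducing everything to $(\underline{13}2)\pi-(\underline{31}2)\pi=\des\,\pi-(\F\,\pi-1)$. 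Your telescoping proof of that identity, via $g(j)=\chi(\pi_j>\pi_k)$ and $\sum_{j=1}^{k-2}\bigl(g(j+1)-g(j)\bigr)=\chi(\pi_{k-1}>\pi_k)-\chi(\pi_1>\pi_k)$, is valid (including the boundary case $k=2$, where the empty sum agrees with the right-hand side), and summing over $k$ yields $\des\,\pi-(\pi_1-1)$ as claimed. What the two approaches buy: the paper's citation keeps its Section~3 short and defers the bookkeeping to \cite{Bur}, while your argument makes the lemma self-contained and isolates a clean auxiliary pattern identity, which also makes transparent exactly where distinctness of letters (i.e., the restriction to $\SS_n$ rather than a general rearrangement class) is used --- a point you rightly flag at the end.
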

\begin{proof}
This is precisely Lemma~2.7 in \cite{Bur}, hence we omit the proof.
\end{proof}

\begin{lemma}\label{M+M}
$\MAJ~\pi+\MAJ~\phi(\pi)=(n+1)\des~\pi-(\F~\pi-1)$.
\end{lemma}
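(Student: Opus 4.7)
The plan is to reduce Lemma~\ref{M+M} to Burstein's analogous identity for his involution $p$. By Lemma~\ref{M+S}, it suffices to prove $\MAJ~\phi(\pi) = \STAT~\pi$; and since Burstein's proof of Theorem~\ref{thm-Bur} in \cite{Bur} shows that $\MAJ~p(\pi) = \STAT~\pi$, the task further reduces to establishing $\MAJ~\phi(\pi) = \MAJ~p(\pi)$. I would prove this via the stronger claim $D(\phi(\pi)) = D(p(\pi))$.

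The maps $\phi$ and $p$ are defined by the same triple construction, share the same first letter, and apply the same shuffle-set rule \eqref{phish}, so $\F~\phi(\pi) = \F~p(\pi) = \F~\pi$ and $Sh(\phi(\pi)) = Sh(p(\pi))$; in particular, their images induce the same partition of $[n]$ into maximal runs of ``top'' positions (entries $\geq \F~\pi$) and ``bottom'' positions (entries $< \F~\pi$). For the top and bottom subwords, $\phi$ uses $\jmath$ (after standardization via $c$ in the top case) while $p$ uses reversal-complement. Although these are distinct involutions on $\SS_m$, both act identically on descent sets via $D \mapsto \{m-i : i \in D\}$---for $\jmath$ this is precisely \eqref{jD}, and for $(\cdot)^{rc}$ it follows from the definition. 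Combined with the descent-preservation of the coding map $c$ (noted after \eqref{6w=6cw}), this yields $D(\phi(\pi)^t) = D(p(\pi)^t)$ and $D(\phi(\pi)^b) = D(p(\pi)^b)$.

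The final step is to observe that for any $\tau \in \SS_n$, the descent set $D(\tau)$ is determined by the four pieces of data $\F(\tau)$, $Sh(\tau)$, $D(\tau^t)$, and $D(\tau^b)$. Indeed, each descent of $\tau$ falls into exactly one of three disjoint categories: a position $i \in Sh(\tau)$ with $\tau_i \geq \F(\tau) > \tau_{i+1}$ (always a descent), an internal descent within a maximal top-run, or an internal descent within a maximal bottom-run; the latter two classes are obtained from $D(\tau^t)$ and $D(\tau^b)$ by shifts determined by the run boundaries (with a minor special case for the initial run, whose first position holds $\F(\tau)$ rather than a top subword letter). Applying this observation to both $\phi(\pi)$ and $p(\pi)$ gives $D(\phi(\pi)) = D(p(\pi))$, hence $\MAJ~\phi(\pi) = \MAJ~p(\pi) = \STAT~\pi$, and Lemma~\ref{M+M} follows.

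The main obstacle I expect is making this reconstruction precise, since it requires careful tracking of offsets across the run decomposition. A more self-contained alternative would be to directly compute $\MAJ~\pi + \MAJ~\phi(\pi)$ by pairing each $i \in D(\pi^t)$ with $n_t - i \in D(\phi(\pi)^t)$ (where $n_t = n - \F~\pi$) and similarly for the bottom subword; these pairings yield constant joint contributions that, combined with the shuffle terms from \eqref{phish}, should sum to $(n+1)\des~\pi - (\F~\pi - 1)$.
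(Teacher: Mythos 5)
Your proposal is correct, but it takes a genuinely different route from the paper's. The paper proves Lemma~\ref{M+M} directly, by pairing descents of $\pi$ with descents of $\phi(\pi)$: a descent $i\in D(\pi)\setminus Sh(\pi)$ pairs with $n+1-i\in D(\phi(\pi))\setminus Sh(\phi(\pi))$ and contributes $n+1$, while a descent $i\in D(\pi)\cap Sh(\pi)$ pairs with $n+1-i-s_i$ and contributes $n+1-s_i$, the corrections $s_i$ summing to $|\pi^b|=\F~\pi-1$; the identity $\MAJ~\phi(\pi)=\STAT~\pi$ is only deduced afterwards, in the corollary, from Lemmas~\ref{M+S} and~\ref{M+M}. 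You run the logic in the opposite direction: you first prove $\MAJ~\phi(\pi)=\STAT~\pi$ by comparing $\phi$ with Burstein's involution $p$, using that $\jmath$ and reverse-complement act identically on descent sets (via \eqref{jD} for $\jmath$ and a direct check for $rc$), that standardization preserves descent sets, and that $D(\tau)$ is reconstructible from $\bigl(\F~\tau, Sh(\tau), D(\tau^t), D(\tau^b)\bigr)$; this gives $D(\phi(\pi))=D(p(\pi))$, hence $\MAJ~\phi(\pi)=\MAJ~p(\pi)=\STAT~\pi$, and Lemma~\ref{M+M} follows from Lemma~\ref{M+S}. This is sound: the reconstruction claim holds because run boundaries are exactly the positions in $Sh(\tau)$, top-to-bottom transitions are always descents, bottom-to-top transitions never are, and internal descents of the runs are read off from $D(\tau^t)$ and $D(\tau^b)$ up to shifts (with the special treatment of position $1$ you note). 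The trade-off is that your argument rests on the pointwise property $\MAJ~p(\pi)=\STAT~\pi$ of Burstein's involution, which is stronger than the bare equidistribution statement of Theorem~\ref{thm-Bur} quoted here (though it is indeed what Burstein's proof in \cite{Bur} establishes), so your proof is less self-contained; in exchange it avoids the $s_i$ bookkeeping and makes transparent that $\phi$ is Burstein's $p$ with $rc$ replaced by $\jmath$, differing only in its effect on inverse descent sets. Your fallback ``direct pairing'' sketch at the end is essentially the paper's own proof.
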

\begin{proof}
The basic idea is to pair the descents of $\pi$ with the descents of $\phi(\pi)$. We discuss by two cases.
\begin{itemize}
\item If $i\in D(\pi)\backslash Sh(\pi)$, then either $\pi_i>\pi_{i+1}>\pi_1$ or $\pi_1>\pi_i>\pi_{i+1}$. Using \eqref{jD} and \eqref{phish} we see that either $\phi(\pi)_{n+1-i}>\phi(\pi)_{n+2-i}>\phi(\pi)_1$ or $\phi(\pi)_1>\phi(\pi)_{n+1-i}>\phi(\pi)_{n+2-i}$, both of which lead to $(n+1-i)\in D(\phi(\pi))\backslash Sh(\phi(\pi))$. So this pair contributes $$i+(n+1-i)=n+1$$ to the sum $\MAJ~\pi+\MAJ~\phi(\pi)$.
\item If $i\in D(\pi)\cap Sh(\pi)$, then $\pi_i\geq \pi_1>\pi_{i+1}$, and according to \eqref{phish}, $\phi(\pi)_{n+1-i-s_i}\geq \phi(\pi)_1>\phi(\pi)_{n+2-i-s_i}$, where $s_i$ is the length of the maximal block of consecutive entries of $\phi(\pi)$ that ends with $\phi(\pi)_{n+1-i}$ and is contained in $\phi(\pi)^b$. Therefore this pair contributes $$i+(n+1-i-s_i)=n+1-s_i$$ to the sum $\MAJ~\pi+\MAJ~\phi(\pi)$.
\end{itemize}
All such $s_i$ as defined above clearly sum up to the total length of $\phi(\pi)^b$:
$$
\sum_{i\in D(\pi)\cap Sh(\pi)}s_i=|\phi(\pi)^b|=|\pi^b|=\F~\pi-1.
$$
We add the contributions from both cases to arrive at the desired identity.
\end{proof}

\begin{corollary}
$\STAT~\phi(\pi)=\MAJ~\pi,\;
\MAJ~\phi(\pi)=\STAT~\pi, \;\des~\phi(\pi)=\des~\pi.$
\end{corollary}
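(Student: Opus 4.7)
The plan is to combine the two identities already established (Lemma~\ref{M+S} and Lemma~\ref{M+M}) together with the two pieces of structure that the preceding lemmas and remarks have given us, namely that $\phi$ is an involution and that $\F~\phi(\pi)=\F~\pi$. All three equalities in the corollary should then fall out by simple arithmetic, with no new combinatorial work required.

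First I would obtain the descent count. Apply Lemma~\ref{M+M} to $\pi$ and, separately, to $\phi(\pi)$: the latter yields
$\MAJ~\phi(\pi)+\MAJ~\phi^2(\pi)=(n+1)\des~\phi(\pi)-(\F~\phi(\pi)-1)$, and since $\phi^2=\id$ and $\F~\phi(\pi)=\F~\pi$, the left-hand side is exactly $\MAJ~\phi(\pi)+\MAJ~\pi$, which by Lemma~\ref{M+M} applied to $\pi$ equals $(n+1)\des~\pi-(\F~\pi-1)$. Comparing the two expressions forces $\des~\phi(\pi)=\des~\pi$.

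Next I would read off $\MAJ~\phi(\pi)=\STAT~\pi$ directly by subtracting Lemma~\ref{M+S} (for $\pi$) from Lemma~\ref{M+M} (for $\pi$), since both right-hand sides coincide. Finally, applying Lemma~\ref{M+S} to $\phi(\pi)$ gives $\MAJ~\phi(\pi)+\STAT~\phi(\pi)=(n+1)\des~\phi(\pi)-(\F~\phi(\pi)-1)$, and the right-hand side equals $\MAJ~\pi+\MAJ~\phi(\pi)$ by Lemma~\ref{M+M}; cancelling $\MAJ~\phi(\pi)$ yields $\STAT~\phi(\pi)=\MAJ~\pi$.

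There is essentially no obstacle here: the heavy lifting was done in Lemma~\ref{M+M}, whose proof already uses the detailed description of $\phi$ via its triple. The only thing to be careful about is the order of derivation—one must establish the $\F$-preservation and the involution property (both recorded in the remark following the definition of $\phi$) before invoking Lemma~\ref{M+M} a second time on $\phi(\pi)$, so that the substitution $\phi^2(\pi)=\pi$ and $\F~\phi(\pi)=\F~\pi$ are legitimate. Once that is noted, the corollary is a one-line consequence of the preceding two lemmas.
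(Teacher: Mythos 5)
Your proposal is correct and follows essentially the same route as the paper: both proofs are pure arithmetic consequences of Lemma~\ref{M+S} and Lemma~\ref{M+M} combined with the facts that $\phi$ is an involution and $\F~\phi(\pi)=\F~\pi$, differing only in bookkeeping (you get $\des~\phi(\pi)=\des~\pi$ by applying Lemma~\ref{M+M} to both $\pi$ and $\phi(\pi)$, and $\STAT~\phi(\pi)=\MAJ~\pi$ via Lemma~\ref{M+S} at $\phi(\pi)$, while the paper obtains the latter by substituting $\phi(\pi)$ into $\MAJ~\phi(\pi)=\STAT~\pi$). Your caution about establishing the involution and $\F$-preservation before invoking the lemmas at $\phi(\pi)$ matches the paper's reliance on the remark following the definition of $\phi$.
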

\begin{proof}
It immediately follows from Lemma~\ref{M+S} and \ref{M+M} that $\MAJ~\phi(\pi)=\STAT~\pi$, and since $\phi$ is an involution we also get $\STAT~\phi(\pi)=\MAJ~\pi$. Finally we have:
\begin{align*}
(n+1)\des~\phi(\pi)-(\F~\phi(\pi)-1) &\stackrel{\text{\tiny Lemma~\ref{M+M}}}{=} \MAJ~\phi(\pi)+\MAJ~\phi^2(\pi)\\
&\stackrel{\phantom{aaa}\phi^2=1\phantom{aa}}{=} \STAT~\pi+\MAJ~\pi\\
&\stackrel{\text{\tiny Lemma~\ref{M+S}}}{=} (n+1)\des~\pi-(\F~\pi-1).
\end{align*}
This directly leads to $\des~\phi(\pi)=\des~\pi$ since we already noted that $\F~\phi(\pi)=\F~\pi$.
\end{proof}
This involution $\phi$ is admittedly a bit involved, so we include one complete example here to illustrate it.
\begin{example}
Given $w=1~1~2~3~4~4~4~5~6$, take $v=4~3~4~4~2~1~6~5~1 \in R(w)$. We first compute the statistics on $v$:
$$
(\des, Id, \F, \MAJ, \STAT)v=(5, \{2,3,4,8\}, 4, 25, 21).
$$
Now let $\pi=c(v)=5~4~6~7~3~1~9~8~2\in \SS_9$, we proceed to find $\phi(\pi)$. First we get the defining triple associated with $\pi$:
$$
(\pi^t,\pi^b,Sh(\pi))=(6~7~9~8,\; 4~3~1~2,\; \{1,2,4,6,8\}).
$$
Applying $\phi$ and according to \eqref{phish}, we get immediately $Sh(\phi(\pi))=\{1,2,4,6,8\}$. But for $\phi(\pi)^t$ and $\phi(\pi)^b$, which involve the map $\jmath$, we need to invoke the RSK algorithm. We first apply RSK on $c(\pi^t)=1~2~4~3$ and $c(\pi^t)^{rc}=2~1~3~4$ to get the following four SYT.
\begin{ferrers}
\addcellrows {3+1}
\highlightcellbyletter{1}{1}{1}
\highlightcellbyletter{1}{2}{2}
\highlightcellbyletter{1}{3}{3}
\highlightcellbyletter{2}{1}{4}
\addtext{0.8}{-1.6}{$P_{c(\pi^t)}$}
\putright
\addcellrows {3+1}
\highlightcellbyletter{1}{1}{1}
\highlightcellbyletter{1}{2}{2}
\highlightcellbyletter{1}{3}{3}
\highlightcellbyletter{2}{1}{4}
\addtext{0.8}{-1.6}{$Q_{c(\pi^t)}$}
\putright
\addcellrows {3+1}
\highlightcellbyletter{1}{1}{1}
\highlightcellbyletter{1}{2}{3}
\highlightcellbyletter{1}{3}{4}
\highlightcellbyletter{2}{1}{2}
\addtext{0.8}{-1.6}{$P_{c(\pi^t)^{rc}}$}
\putright
\addcellrows {3+1}
\highlightcellbyletter{1}{1}{1}
\highlightcellbyletter{1}{2}{3}
\highlightcellbyletter{1}{3}{4}
\highlightcellbyletter{2}{1}{2}
\addtext{0.8}{-1.6}{$Q_{c(\pi^t)^{rc}}$}
\end{ferrers}
Then we apply inverse RSK on the pair $(P_{c(\pi^t)}, Q_{c(\pi^t)^{rc}})$ to get $\jmath(c(\pi^t))=4~1~2~3$, hence 
$\phi(\pi)^t=c^{-1}\circ \jmath \circ c(\pi^t)=9~6~7~8$. Similarly for $\phi(\pi)^b$, we apply RSK on $\pi^b=4~3~1~2$ and $(\pi^b)^{rc}=3~4~2~1$ to get the following four SYT.
\begin{ferrers}
\addcellrows {2+1+1}
\highlightcellbyletter{1}{1}{1}
\highlightcellbyletter{1}{2}{2}
\highlightcellbyletter{2}{1}{3}
\highlightcellbyletter{3}{1}{4}
\addtext{0.6}{-2}{$P_{\pi^b}$}
\putright
\addcellrows {2+1+1}
\highlightcellbyletter{1}{1}{1}
\highlightcellbyletter{1}{2}{4}
\highlightcellbyletter{2}{1}{2}
\highlightcellbyletter{3}{1}{3}
\addtext{0.6}{-2}{$Q_{\pi^b}$}
\putright
\addcellrows {2+1+1}
\highlightcellbyletter{1}{1}{1}
\highlightcellbyletter{1}{2}{4}
\highlightcellbyletter{2}{1}{2}
\highlightcellbyletter{3}{1}{3}
\addtext{0.6}{-2}{$P_{(\pi^b)^{rc}}$}
\putright
\addcellrows {2+1+1}
\highlightcellbyletter{1}{1}{1}
\highlightcellbyletter{1}{2}{2}
\highlightcellbyletter{2}{1}{3}
\highlightcellbyletter{3}{1}{4}
\addtext{0.6}{-2}{$Q_{(\pi^b)^{rc}}$}
\end{ferrers}
Reversing RSK on $(P_{\pi^b}, Q_{(\pi^b)^{rc}})$ then produces $\phi(\pi)^b=\jmath(\pi^b)=1~4~3~2$. Together we have
$$
(\phi(\pi)^t,\phi(\pi)^b,Sh(\phi(\pi)))=(9~6~7~8,\; 1~4~3~2,\; \{1,2,4,6,8\}),
$$
which corresponds uniquely to $\phi(\pi)=5~1~9~6~4~3~7~8~2\in \SS_{R(w)}$. Therefore $\phi_{R(w)}(v)=c^{-1}(\phi(\pi))=4~1~6~4~3~2~4~5~1$. Finally we check the statistics match up indeed (with $\MAJ$ and $\STAT$ switched).
$$
(\des, Id, \F, \STAT, \MAJ)\phi_{R(w)}(v)=(5, \{2,3,4,8\}, 4, 25, 21).
$$
\end{example}

\section{Further questions}\label{sec:conclusion}
Seeing our equidistribution result \textcolor{red}{in} Corollary~\ref{main-Rw}, a natural next step is to consider the so-called ``$k$-extension'' (see \cite{Han1, Han2, CF}). Namely, for our prototype of the alphabet, $[m]=\{1,2,\ldots,m\}$, we take two non-negative integers $l$ and $k$ such that $l+k=m$. Then we call the letters $1,2,\ldots,l$ \emph{small} and the letters $l+1,l+2,\ldots,l+k=m$ \emph{large}. Now consider any total ordering, say $\O$, on $[m]$ that is compatible with $k$, in the sense that for any small letter $x$ and any large letter $y$, we have $(x,y)\in \O$. Both $\des$ and $\MAJ$ have their $k$-extensions defined with respect to $\O$, so one may ask if there exists a $k$-extension for $\STAT$, such that the joint distribution of $(\des, \MAJ, \STAT)$ are still the same as that of $(\des, \STAT, \MAJ)$.
 
Finally, Corollary~\ref{main-Rw} justifies $(\des, \STAT)$ as a new Euler-Mahonian (over rearrangement class of words) pair in the family consisting of $(\des, \MAJ), (\des, \MAK)$ and $(\exc, \DEN)$ (see \cite{CSZ} for undefined statistics and further details). It would be interesting to pair other Mahonian statistics discovered in \cite{BS} with either $\des$ or $\exc$, and then pigeonhole the resulting pairs into the existing (or even new) families of Euler-Mahonian statistics. Another possible direction for furthering the study on $\STAT$ is to consider the equidistribution problems on pattern avoiding permutations, see the recent work in \cite{Am, Ch}.


\section*{Acknowledgement}
The first two authors were supported by the Fundamental Research Funds for the Central Universities (No.~CQDXWL-2014-Z004) and the National Natural Science Foundation of China (No.~11501061).

\end{document}